\newtcbox{\mymath}[1][]{%
    nobeforeafter, math upper, tcbox raise base,
    enhanced, colframe=blue!30!black,
    colback=blue!30, boxrule=1pt,
    #1}
\definecolor{shadecolor}{gray}{0.90}
\newcommand{\footremember}[2]{%
    \footnote{#2}
    \newcounter{#1}
    \setcounter{#1}{\value{footnote}}%
}
\newtheorem{theorem}{Theorem}[section] 
\newtheorem{corollary}[theorem]{Corollary}
\newtheorem{proposition}[theorem]{Proposition}
\newtheorem*{conjecture*}{Conjecture}
\theoremstyle{definition}
\theoremstyle{definition}
\newtheorem{definition}[theorem]{Definition}
\theoremstyle{plain}
\theoremstyle{definition}
\theoremstyle{definition}
\newtheorem{remark}[theorem]{Remark}
\theoremstyle{definition}
\newtheorem*{theorem*}{Theorem}
\newtheorem*{rep@theorem}{\rep@title}
\newcommand{\newreptheorem}[2]{%
\newenvironment{rep#1}[1]{%
 \def\rep@title{#2 \ref{##1}}%
 \begin{rep@theorem}}%
 {\end{rep@theorem}}}
\newcommand\ackname{Acknowledgments}
  \newenvironment{acknowledgments}{%
      \titlepage
      \null\vfil
      \@beginparpenalty\@lowpenalty
      \begin{center}%
        \bfseries \ackname
        \@endparpenalty\@M
      \end{center}}%
     {\par\vfil\null\endtitlepage}
\begin{document}

\title{\centering{Concordances of sums of alternating torus knots and their mirrors to $L$-space knots}}
\author{%
  Dan Guyer\footremember{UW}{
Department of Mathematics, University of Washington,
Seattle, WA 98195-4350}\\
\href{mailto:dguyer@uw.edu}{\nolinkurl{dguyer@uw.edu}}%
  \and Thomas Sachen\footremember{Princeton}{Department of Mathematics, Princeton University, Princeton, NJ 08544-1000}%
  \\
  \href{mailto:tsachen@princeton.edu}{\nolinkurl{tsachen@princeton.edu}}}

\date{\today}

\maketitle

\begin{abstract}
Continuing the work of Zemke, Livingston and Allen, we consider when linear combinations of torus knots are concordant to $L$-space knots. We begin by proving Allen's conjecture for alternating torus knots. That is, we prove that a linear combination of alternating torus knots is concordant to an $L$-space knot if and only if the connected sum is a single torus knot. Then we establish a necessary condition for when a linear combination of torus knots is concordant to an $L$-space knot. 
\end{abstract}

\section{Introduction}
In 1966, Fox and Milnor~\cite{Fox} first introduced the knot concordance group. In 2003--2004, new concordance invariants became available through knot Floer homology independently discovered by Oszv\'ath-Szab\'o~\cite{Peter} and Rasmussen~\cite{Rasmussen}. These knot Floer invariants have proven useful in studying the concordance classes of linear combinations of torus knots (connected sums of torus knots and the of reverse of their mirrors). In Zemke~\cite{Zemke}, they observed that some of these invariants obstructed certain connected sums of torus knots from being concordant to an $L$-space knot. This observation was examined further in Livingston~\cite{Livingston} who established that a nontrivial connected sum of positive torus knots $K$ is concordant to an $L$-space knot if and only if $K$ is a single torus knot by using the Levine-Tristam signature function~\cite{Conway}~\cite{Levine}~\cite{Tristram} and the tau invariant~\cite{Zoltan}\cite{Rasmussen}. Then Allen~\cite{Allen} proved that no linear combination of two torus knots is concordant to an $L$-space knot as well as making steps towards the general case by using the tau invariant, upsilon invariant~\cite{Andras}, and the Levine-Tristram signature function. This previous work led to the following conjecture. 
\begin{conjecture*}[Allen~\cite{Allen}]
If a connected sum of (possibly several) torus knots is concordant to an $L$–space knot, then it is concordant to a positive torus knot.
\end{conjecture*}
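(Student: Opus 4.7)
The plan is to assemble a family of concordance invariants that are constrained for $L$-space knots, compute them on a general linear combination $K = \sum_{i} \varepsilon_i \, T(p_i,q_i)$ with $\varepsilon_i \in \{\pm 1\}$, and show that the constraints can only be simultaneously satisfied when $K$ is concordant to a single positive torus knot. Since $\tau$, $\Upsilon$, and the Levine--Tristram signature function are all additive under connected sum (and odd under mirroring), the invariants of $K$ are explicit sums of known functions of the $(p_i,q_i)$, so the task is to show that the algebraic constraints imposed by the $L$-space hypothesis force massive cancellation, and in particular rule out any mixed-sign combination.

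More concretely, for a knot concordant to an $L$-space knot one has: (i) $\tau(K) = g_4(K)$ and this common value equals half the breadth of the symmetrized Alexander polynomial; (ii) the Upsilon function $\Upsilon_K(t)$ is concave down on $[0,1]$ with slopes determined by the $\pm 1$-coefficients of the Alexander polynomial of some $L$-space knot; and (iii) the signature function $\sigma_K(\omega)$ is bounded below by $-2g_4(K)$ and non-increasing on $(0,1)$ after an appropriate normalization. I would first follow Livingston's argument for the positive case and then, on top of it, use the extra information encoded in $\Upsilon$ (as Allen did) to rule out the contribution of summands with $\varepsilon_i = -1$. The idea is that a negative summand $-T(p,q)$ contributes a convex piece to $\Upsilon$ with a singularity whose location is determined by $p,q$, and such a singularity cannot be cancelled by the contributions of positive summands at any other parameter.

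The main obstacle is cross-cancellation: different torus knots share singular values of $t$ in their $\Upsilon$ functions and common jumps of their signature functions, so naive invariants alone are not fine enough. To circumvent this I would exploit the alternating case first, where $T(2,2k+1)$ has particularly clean formulas ($\sigma = -2k$, $\Upsilon_{T(2,2k+1)}(t) = -k t$ for $t \in [0,1]$, and Alexander polynomial $(t^{2k+1}+1)/(t+1)$), so that the invariants of the sum reduce to piecewise-linear functions with slopes that can be matched explicitly against the rigid shape of any $L$-space knot's invariants. Convexity of $\Upsilon$ together with the equality $\tau = g_4$ should force all $\varepsilon_i$ to have the same sign, and then Livingston's theorem for positive combinations finishes the alternating case.

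For the full conjecture beyond alternating knots, the same philosophy should apply but the analysis of $\Upsilon$ near its singular points becomes much more delicate, because singularities of $\Upsilon_{T(p,q)}$ sit at rational points of the form $2/q$ and combine nontrivially in a sum. The hard step will be a \emph{rigidity} statement: if the piecewise linear function $\sum \varepsilon_i \Upsilon_{T(p_i,q_i)}$ coincides with the $\Upsilon$ of \emph{some} $L$-space knot on $[0,1]$, then the underlying data $(p_i,q_i,\varepsilon_i)$ must reduce to a single positive torus knot. I would attempt to prove this by extracting, from the slope changes of $\Upsilon$, a ``multiplicity'' sequence that must match the Alexander polynomial coefficient sequence of an $L$-space knot (which is a $\pm 1$, alternating-sign sequence), and then combinatorially identifying which collections of torus-knot slope data can produce such a sequence.
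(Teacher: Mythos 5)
There is a fundamental mismatch here: the statement you are trying to prove is stated in the paper as an open \emph{conjecture} (due to Allen), and the paper does not prove it. The paper's actual results are strictly weaker: Theorem~\ref{thm:specificours} settles the conjecture only for connected sums of alternating torus knots $K=K_2^+\# K_2^-$, and Theorem~\ref{thm:genours} gives only a necessary divisibility condition $\det(K_2^-)\mid\det(K^+)$ in the general case. Your proposal is a research program, not a proof, and you concede this yourself: the ``hard step'' --- the rigidity statement that if $\sum_i \varepsilon_i\,\Upsilon_{T(p_i,q_i)}$ agrees with the $\Upsilon$ of some $L$-space knot then the data reduce to a single positive torus knot --- is exactly the open content of the conjecture, and nothing in your sketch establishes it. The cross-cancellation problem you flag is not a technical nuisance to be ``circumvented''; it is the reason Allen's use of $\tau$, $\Upsilon$, and the Levine--Tristram signature only resolved the case of two summands. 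Note also that a negative summand's singularity in $\Upsilon$ need not be cancelled by the positive summands at all: concordance to an $L$-space knot $J$ only forces $\Upsilon_K=\Upsilon_J$, and $\Upsilon_J$ has its own singularities, so the claimed impossibility of matching a ``convex piece'' is unsubstantiated.

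Even for the alternating case, where you hope convexity of $\Upsilon$ plus $\tau=g_4$ forces all signs to agree, you give no argument, and the paper's proof goes by an entirely different route that you might compare against: it uses the determinant rather than $\Upsilon$. From Proposition~\ref{prop:genallen} one gets $\det(J)=\det(K_2^+)/\det(K_2^-)$; on the other hand the double branched cover $\Sigma_2(K)$ is a connected sum of lens spaces $L(q,1)$ and $L(q,q-1)$, which is \emph{reduced} in the sense of Aceto--Celoria--Park, so Proposition~\ref{cor:homoldiv} forces $\det(K)=\det(K_2^+)\det(K_2^-)$ to divide $\det(J)$. These two facts force $\det(K_2^-)=1$, i.e.\ $K_2^-$ is trivial, and Livingston's Proposition~\ref{prop:livingston} finishes. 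If you want to contribute beyond the paper, the place to aim is precisely your rigidity step, but as written your proposal neither proves the conjecture nor recovers the paper's partial results.
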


We make progress towards this conjecture by using the determinant of a knot to conclude that no nontrivial linear combination of $T(2,q)$ torus knots is concordant to an $L$-space knot as well as providing a necessary condition for when a linear combination of torus knots is concordant to an $L$-space knot. For ease of notation, the following conventions will be used. 

\smallskip
\noindent\textbf{Notation and conventions:} All torus knots $T(p,q)$ are assumed to satisfy $p,q>0$ with $\gcd(p,q)=1$. It is assumed we are working in the smooth knot concordance group throughout, and we use $K\simeq J$ to denote that $K$ is concordant to $J$. Additionally, we use $-K$ to denote the reverse of the mirror of the knot $K$.
We shall use $K_2^+$ (resp., $K_2^-$) to represent a connected sum of positive (resp., negative) $T(2,q)$ torus knots. Similarly, we shall use $K^+$ (resp., $K^-$) to denote an arbitrary connected sum of positive (resp., negative) $T(p,q)$ torus knots. This notation will be used when describing specific parts of a connected sum of torus knots. For example, when we write $K=K^+ \# K^- \# K_2^-$, the summand $K_2^-$ is the connected sum of all torus knots of the form $-T(2,q)$ in $K$ and $K^-$ is the connected sum of all remaining negative torus knots in $K$. Meanwhile, when we write $K=K^+\#K^-$, the summand $K^-$ denotes the connected sum of all negative torus knots in $K$. Finally, we may assume throughout that every connected sum of torus knots $K=K^+ \# K^-$ is reduced in the sense that it does not contain a summand of the form $T(p,q)\# -T(p,q)$. 

With these conventions in place, we can state our two main theorems. The first theorem will provide a firm conclusion for linear combinations of $T(2,q)$ torus knots. 

\begin{reptheorem}{thm:specificours}
A linear combination of alternating torus knots is concordant to an $L$-space knot if and only if $K=T(2,q)$. 
\end{reptheorem}

We may also apply the strategies in the previous theorem to the general case. In doing so, we establish a necessary condition for when a linear combination of torus knots is concordant to an $L$-space knot.

\begin{reptheorem}{thm:genours}
If a connected sum of torus knots $K= K^+ \# K^- \# K_2^- $ is concordant to an $L$-space knot, then $\det(K_2^-)$ must divide $ \det(K^+)$.
\end{reptheorem}

\section{Background}
We shall begin by introducing theorems of Livingston~\cite{Livingston} and Allen~\cite{Allen}. The following proposition, found in Livingston~\cite{Livingston}, proves Allen's conjecture for positive torus knots. 

\begin{proposition}[Livingston~\cite{Livingston}]\label{prop:livingston}
Let $\{(p_i,q_i)\}_{i=1,\ldots,n}$ be a set of pairs of relatively prime positive integers with $2\leq p_i<q_i$ for all $i$ and with $n>1$. Then $\#_iT(p_i,q_i)$ is not concordant to an $L$-space knot. 

\end{proposition}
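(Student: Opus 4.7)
The plan is to derive a contradiction from the assumption that $K := \#_{i=1}^n T(p_i,q_i) \simeq L$ for some $L$-space knot $L$ with $n\geq 2$, by combining information from the Ozsv\'ath-Szab\'o $\tau$-invariant and the Levine-Tristram signature function $\sigma_\omega$. Both invariants are concordance invariants that are additive under connected sum, and both are sharply computable for torus knots via classical formulas.

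First I would pin down the Seifert genus of $L$. Since $\tau(T(p,q))=(p-1)(q-1)/2=g(T(p,q))$, additivity of $\tau$ gives
\[
\tau(L) \;=\; \tau(K) \;=\; \sum_{i=1}^{n} \frac{(p_i-1)(q_i-1)}{2} \;=\; g(K).
\]
For any $L$-space knot one has $\tau(L)=g(L)=g_4(L)$. Combining this with the concordance invariance of $g_4$ and the trivial bound $g_4(K)\le g(K)$ gives the chain $g(K)=\tau(L)=g_4(L)=g_4(K)\le g(K)$, forcing $g(L)=g(K)$. So $L$ is forced to have the same Seifert genus as $K$.

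Next I would play the Levine-Tristram signature function against this genus equality. For an $L$-space knot $L$ the knot Floer complex has a staircase shape determined by the Alexander polynomial (whose coefficients lie in $\{-1,0,1\}$ and alternate in sign), which imposes strong rigidity on $\sigma_\omega(L)$: the jump structure of $\sigma_\omega(L)$ over $S^1$ is controlled by the roots of $\Delta_L$, and in particular there is a quantitative bound on how negative $\sigma_\omega(L)$ can be near specific roots of unity, given $g(L)$. On the other hand, the Brieskorn formula gives an explicit count for $\sigma_\omega(T(p_i,q_i))$ in terms of lattice points in a triangle with vertices depending on $(p_i,q_i)$; this description lets one locate roots of unity $\omega_0$ at which the jumps of several torus knot summands stack constructively. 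Additivity of $\sigma_\omega$ then yields a lower bound on $-\sigma_{\omega_0}(K)$ that, combined with $g(L)=g(K)$ from the previous step, contradicts the rigidity bound for $\sigma_{\omega_0}(L)$.

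The main obstacle I anticipate is the careful selection of $\omega_0$: one needs a root of unity at which the torus-knot jump locations for many summands align, so that the additive signature sum exceeds what is permitted for an $L$-space knot of genus $g(K)$. Making this synchronization quantitative for an arbitrary collection $\{(p_i,q_i)\}$ with $2\le p_i<q_i$ is the real work. A natural choice is to focus on $\omega_0$ close to a primitive $q_i$-th root of unity for the $(p_i,q_i)$ having the smallest $q_i$, where the Brieskorn count produces a jump of maximal size, and then bootstrap by varying $\omega_0$ among nearby roots of unity to amplify the deficit beyond the $L$-space bound.
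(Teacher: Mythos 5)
The paper does not actually prove this proposition; it is quoted as background from Livingston~\cite{Livingston}, whose argument does indeed combine the $\tau$-invariant with the Levine--Tristram signature function, so your choice of tools matches the cited source, and your first step is correct: $g(L)=\tau(L)=\tau(K)=g(K)$ follows from additivity of $\tau$, $\tau(T(p,q))=g(T(p,q))$, and $\tau=g$ for $L$-space knots. The problem is that everything after that is only a plan. The decisive ingredient --- the precise ``rigidity bound'' on $\sigma_\omega(L)$ for an $L$-space knot $L$ --- is never stated, and you yourself flag the selection of $\omega_0$ and the quantitative synchronization as ``the real work.'' That synchronization \emph{is} the proof; Livingston's paper consists essentially of formulating and proving such a structural lemma about the signature function of an $L$-space knot (tied to the staircase form of $\Delta_L$ and the location of its unit roots) and then playing it against the explicit signature data of torus knots. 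As written, your proposal does not contain a proof of the statement.

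Moreover, the shape of the contradiction you describe cannot work in the form stated. At any $\omega_0$ avoiding the roots of $\Delta_K$ and $\Delta_L$, concordance gives the exact equality $\sigma_{\omega_0}(L)=\sigma_{\omega_0}(K)$, and both sides automatically satisfy the universal bound $\lvert\sigma_{\omega_0}\rvert\le 2g_4=2g(K)$; hence no bound on $\sigma_{\omega_0}(L)$ that depends only on $g(L)=g(K)$ can ever be violated by making $-\sigma_{\omega_0}(K)$ large. Any genuine obstruction must use finer features of $L$-space knots --- for instance that $\Delta_L$ has alternating $\pm1$ coefficients, that its unit roots (hence the possible jump locations of $\sigma_\omega(L)$) are constrained, and that via Fox--Milnor these roots must match up with those of $\prod_i\Delta_{T(p_i,q_i)}$ --- none of which your sketch identifies. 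Your recipe for $\omega_0$ is also off: the roots of $\Delta_{T(p,q)}$ are the simple $pq$-th roots of unity that are neither $p$-th nor $q$-th roots, so $\sigma_\omega(T(p_i,q_i))$ has \emph{no} jump at a primitive $q_i$-th root of unity, and every jump it does have has magnitude exactly $2$; there is no single root of unity where one summand produces ``a jump of maximal size.'' What can grow is the accumulated signature along an arc, and controlling where that accumulation occurs relative to the jump pattern allowed for an $L$-space knot of genus $g(K)$ is precisely the missing lemma. (For contrast, the present paper's own new results avoid signatures entirely, using determinants and double branched covers via Aceto--Celoria--Park, but for this proposition it simply cites Livingston.)
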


Allen~\cite{Allen} established the following proposition relating the Alexander polynomial of a linear combination of torus knots to a concordant $L$-space knot. 

\begin{proposition}[Allen~\cite{Allen}] \label{prop:genallen}
If $K=T(p_1,q_1)\# T(p_2,q_2) \# \cdots \# T(p_{m},q_{m}) \# -T(p_{1}^{'},q_{1}^{'}) \# -T(p_{2}^{'},q_{2}^{'}) \# \cdots \# -T(p_{n}^{'},q_{n}^{'})$ where $m,n \geq 1$, is concordant to an $L$-space knot $J$, then 
$$\frac{\prod_{i=1}^{m}\Delta_{T(p_i,q_i)}(t)}{\prod_{i=1}^{n}\Delta_{T(p_i^{'},q_i^{'})}(t)}=\Delta_J(t).$$
\end{proposition}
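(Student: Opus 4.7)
My plan is to apply the Fox--Milnor theorem to the slice knot $K \# -J$ and exploit the cyclotomic factorization of torus knot Alexander polynomials together with the $\tau$-invariant of L-space knots.

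Since $K \simeq J$, the connected sum $K \# -J$ is smoothly slice, so the Fox--Milnor theorem yields
$$\Delta_K(t) \, \Delta_J(t) \doteq f(t) \, f(t^{-1})$$
for some $f \in \mathbb{Z}[t, t^{-1}]$, where $\doteq$ denotes equality up to multiplication by $\pm t^{k}$. Writing $P(t) = \prod_{i=1}^{m} \Delta_{T(p_i,q_i)}(t)$ and $Q(t) = \prod_{j=1}^{n} \Delta_{T(p_j',q_j')}(t)$, multiplicativity of $\Delta$ under connected sum and $\Delta_{-J} \doteq \Delta_J$ give $\Delta_K(t) \doteq P(t)\,Q(t)$. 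Using the classical identity $\Delta_{T(p,q)}(t) = \prod_{d \mid pq,\, d \nmid p,\, d \nmid q} \Phi_d(t)$, the polynomials $P$ and $Q$ are products of cyclotomic factors $\Phi_d$ with $d \geq 2$; each such $\Phi_d$ is reciprocal, so every cyclotomic factor appears with even total multiplicity in $f(t)\,f(t^{-1})$, which forces the parity of the multiplicity of $\Phi_d$ in $\Delta_J$ to match the parity of its multiplicity in $PQ$.

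To pin down degrees, I would use that for an L-space knot $\tau(J) = g(J) = \tfrac{1}{2}\deg \Delta_J$, together with concordance invariance and additivity of $\tau$, and the fact that $\tau(T(p,q)) = \tfrac{1}{2}\deg \Delta_{T(p,q)}$. These give
$$\deg \Delta_J \;=\; 2\tau(J) \;=\; 2\tau(K) \;=\; \deg P - \deg Q,$$
so comparing total degrees in the Fox--Milnor identity forces $\deg f = \deg P$.

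The crucial remaining step is to upgrade these parity and degree constraints to the equality $f(t) \doteq P(t)$, which then yields
$$\Delta_J(t) \;\doteq\; \frac{f(t)\,f(t^{-1})}{P(t)\,Q(t)} \;\doteq\; \frac{P(t)^2}{P(t)\,Q(t)} \;=\; \frac{P(t)}{Q(t)}$$
using that $P$ is reciprocal. I expect this rigidity to be the main obstacle. The approach I would try is to write $f = P \cdot g$ with $\deg g = 0$ and invoke Ozsv\'ath--Szab\'o's structural theorem that the coefficients of $\Delta_J$ are $\pm 1$ with alternating signs; combined with the reciprocity and integrality of $\Delta_J$, this should rule out any nontrivial $g$ and force $f \doteq P$, completing the argument.
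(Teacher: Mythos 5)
The paper does not actually prove this proposition --- it is quoted from Allen's article --- so I am judging your argument on its own terms. Your setup is sound and uses the standard ingredients: Fox--Milnor applied to the slice knot $K \# -J$, multiplicativity of $\Delta$ under connected sum, the cyclotomic factorization of $\Delta_{T(p,q)}$, the parity constraint coming from reciprocity of each $\Phi_d$, and the degree identity $\deg \Delta_J = 2\tau(J) = 2\tau(K) = \deg P - \deg Q$ (valid since $L$-space knots are fibered with $\tau = g$). All of that is correct.

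The gap you flag, however, is genuine, and your proposed repair does not close it. Writing $f = P \cdot g$ with $\deg g = 0$ presupposes that $P$ divides $f$, which is essentially the divisibility you are trying to establish; nothing in the parity and degree constraints gives it to you. Concretely, parity only determines $\operatorname{mult}_{\Phi_d}(\Delta_J)$ modulo $2$. When some $\Phi_d$ divides $P$ with net multiplicity at least $3$ --- which happens as soon as three positive summands share a cyclotomic factor, e.g.\ $\Phi_6$ divides each of $\Delta_{T(2,3)}$, $\Delta_{T(2,9)}$, $\Delta_{T(3,4)}$ --- the multiplicity of $\Phi_d$ in $\Delta_J$ could a priori drop by $2$, with the lost degree absorbed by an even power of some other reciprocal irreducible factor or by a pair $p(t)p(t^{-1})$; neither parity, nor the degree count, nor the alternating $\pm 1$ coefficient theorem visibly excludes this. (For $m=n=1$ your argument does suffice, since every net multiplicity is then $0$ or $1$ and parity plus degree force equality; the general case is where it breaks.) The missing ingredient is the Levine--Tristram signature function, which the present paper's introduction explicitly lists among Allen's tools: its jumps are concordance invariants, each root of $\Delta_{T(p,q)}$ is simple with signature jump exactly $\mp 2$, and the jump of $\sigma_J$ at $\omega$ is bounded by twice the multiplicity of $\omega$ as a root of $\Delta_J$. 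This upgrades your congruence to the inequality $\operatorname{mult}_{\Phi_d}(\Delta_J) \geq \lvert \operatorname{mult}_{\Phi_d}(P) - \operatorname{mult}_{\Phi_d}(Q)\rvert$, after which your degree count forces equality for every $d$, kills all extraneous factors, and yields $\Delta_J = P/Q$.
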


We deduce the next result by coupling Proposition~\ref{prop:genallen} with $\det(K)=\left|\Delta_K(-1)\right|$ to characterize the determinant of an $L$-space knot $J$.

\begin{corollary}\label{coro:det}
If $K=K^+\# K^-$ is concordant to an $L$-space knot $J$, then $\det(J)=\frac{\det(K^{+})}{ \det(K^-)}$. 
\end{corollary}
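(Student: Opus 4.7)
The plan is to combine Proposition~\ref{prop:genallen} with two standard facts about the Alexander polynomial: (i) it is multiplicative under connected sum, so $\Delta_{A \# B}(t) = \Delta_A(t)\,\Delta_B(t)$; and (ii) it is invariant under taking the reverse of the mirror, so $\Delta_{-T(p,q)}(t) = \Delta_{T(p,q)}(t)$. Writing $K^+ = \#_{i=1}^m T(p_i,q_i)$ and $K^- = \#_{j=1}^n -T(p_j',q_j')$, these two facts immediately identify the numerator and denominator in Allen's formula with $\Delta_{K^+}(t)$ and $\Delta_{K^-}(t)$, respectively. Hence, under the hypothesis that $K$ is concordant to an $L$-space knot $J$, one obtains the polynomial identity
\[
\Delta_J(t) \;=\; \frac{\Delta_{K^+}(t)}{\Delta_{K^-}(t)}.
\]

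From here I would simply evaluate at $t=-1$ and pass to absolute values, using the hinted relation $\det(K) = |\Delta_K(-1)|$. This gives
\[
\det(J) \;=\; |\Delta_J(-1)| \;=\; \frac{|\Delta_{K^+}(-1)|}{|\Delta_{K^-}(-1)|} \;=\; \frac{\det(K^+)}{\det(K^-)},
\]
which is the claimed formula.

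There is essentially no obstacle: the only thing to check is that the division is legitimate, i.e.\ that $\det(K^-) \neq 0$. This is automatic because the determinant of any knot is an odd positive integer (the Alexander polynomial of a knot evaluated at $-1$ is always odd), so $\det(K^-) \geq 1$. Thus the corollary follows directly from Proposition~\ref{prop:genallen} without any additional input.
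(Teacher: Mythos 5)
Your proposal is correct and follows exactly the route the paper intends: combine Proposition~\ref{prop:genallen} with multiplicativity of the Alexander polynomial under connected sum and its invariance under reversed mirrors, then evaluate at $t=-1$ and use $\det(K)=\left|\Delta_K(-1)\right|$. Your added remark that $\det(K^-)\neq 0$ (indeed odd, so at least $1$) is a harmless extra check consistent with the paper's one-line deduction.
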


\begin{remark}
Notice that since $\det(K)$ must be an integer for all knots $K$ it must be the case that $\det(K^-)$ divides $\det(K^+)$ in Corollary~\ref{coro:det}.
\end{remark}

We will also utilize results from Aceto-Celoria-Park~\cite{Jung}, which provide insight into the determinant of $L$-space knots. To do so, we must begin with a discussion of reduced connected sums of lens spaces. In Lisca~\cite{Lisca1}\cite{Lisca2}, they define subsets $\mathcal{R},\mathcal{F}_n\subseteq \mathbb{Q}$. The subset $\mathcal{F}_n$ is defined for each $n\geq 2$ as 

$$\mathcal{F}_n := \left\{\frac{m^2n}{mnk+1} : m>k>0, \ \gcd(m,k)=1 \right\} \subseteq \mathbb{Q}.$$

While the numerical definition of $\mathcal{R}$, as provided below, is slightly cumbersome, it has innate topological significance as witnessed in Lisca~\cite{Lisca1}. 

\begin{definition}[Lisca~\cite{Lisca1}]
Let $\mathbb{Q}_{>0}$ denote the set of positive rational numbers, and define maps $f,g:\mathbb{Q}_{>0} \rightarrow \mathbb{Q}_{>0}$ by setting, for $\frac{p}{q}\in \mathbb{Q}_{>0}, p>q>0, \gcd(p,q)=1$, 
$$ f\left(\frac{p}{q}\right)=\frac{p}{p-q}, \ \ g\left(\frac{p}{q}\right)=\frac{p}{q'}$$
where $p>q'>0$ and $qq'=q \pmod{p}$. Define $\mathcal{R}\subseteq \mathbb{Q}_{>0}$ to be the smallest subset of $\mathbb{Q}_{>0}$ such that $f\left(\mathcal{R}\right)\subseteq \mathcal{R}, g\left(\mathcal{R}\right)\subseteq \mathcal{R}$ and $\mathcal{R}$ contains the set of rational numbers $\frac{p}{q}$ such that $p>q>0, \gcd(p,q)=1, p=m^2$ for some $m\in \mathbb{N}$ and $q$ is one of the following types:
\begin{enumerate}
\item $mk\pm 1$ with $m>k>0$ and $\gcd(m,k)=1$;
\item $d(m\pm 1)$, where $d>1$ divides $2m\mp 1$;
\item $d(m\pm 1)$, where $d>1$ is odd and divides $m\pm 1$. 
\end{enumerate}
\end{definition}

Then, in Section~2 of Aceto-Celoria-Park~\cite{Jung}, they use these subsets to define a reduced connected sum of lens spaces. We follow the convention that they use to say that if $p,q\in \mathbb{Z}_{>0}$ and $\text{gcd}(p,q)=1$, then $-p/q$ surgery on the unknot yields the lens space $L(p,q)$. 

\begin{definition}[Aceto-Celoria-Park~\cite{Jung}]\label{def:reducedlens}
A connected sum of lens spaces is said to be \textit{reduced} if the following conditions are satisfied. 
\begin{enumerate}
\item there is no summand $L(p,q)$ with $p/q\in \mathcal{R}$;
\item there is no summand of the form $L(p,q) \# L(p,p-q)$;
\item there is no summand of the form $L(p,q)$ with $p/q\in \mathcal{F}_n$ or $p/(p-q)\in \mathcal{F}_n$. 
\end{enumerate}
\end{definition}
While the above definition provides a general characterization of reduced connected sums of lens spaces, we shall also utilize the following propositions found in Aceto-Celoria-Park~\cite{Jung}. 
\begin{proposition}[Aceto-Celoria-Park~\cite{Jung}]\label{prop:lensmnot4}
The lens space $L(m,1)$ is reduced if and only if $m\neq 4$. 
\end{proposition}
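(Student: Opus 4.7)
The plan is to verify each of the three conditions in Definition~\ref{def:reducedlens} directly for the single lens space $L(m,1)$, treating it as a one-summand connected sum, and to show that failure of the ``reduced'' condition is governed entirely by whether $m=4$.

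First, condition~(2) of Definition~\ref{def:reducedlens} is vacuous: a one-summand sum has no pair $L(p,q)\#L(p,p-q)$ to worry about. So only conditions~(1) and~(3) are live, and each reduces to a membership question about the rational number $m/1$ (and, for (3), also $m/(m-1)$) in the Lisca sets $\mathcal{R}$ and $\mathcal{F}_n$. My strategy is to recall the explicit descriptions of these sets from Lisca~\cite{Lisca1,Lisca2} and check the two directions of the biconditional in turn.

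For the ``only if'' direction, I would assume $L(m,1)$ is non-reduced and try to force $m=4$. Since $\mathcal{R}$ consists precisely of those $p/q$ for which $L(p,q)$ bounds a rational homology ball, and Casson--Gordon's construction shows $L(n^{2},nk-1)$ bounds a rational ball for $\gcd(n,k)=1$, specializing to $n=2,k=1$ gives $L(4,1)$ and hence $4/1\in\mathcal{R}$. The task is to show that this is the only $m/1$ in $\mathcal{R}$: Lisca's classification of lens spaces bounding rational balls lists all such $p/q$, and a direct inspection of that list (using the recursion that generates $\mathcal{R}$ from basic Casson--Gordon pieces) rules out any $m\neq 4$ with $q=1$. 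A parallel inspection of the families $\mathcal{F}_n$, whose explicit combinatorial description in Lisca~\cite{Lisca2} is built from blow-up/continued fraction operations on base elements, shows that the only way for $m/1$ or $m/(m-1)$ to land in some $\mathcal{F}_n$ is again at the base case $m=4$.

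For the ``if'' direction, I would simply exhibit $m=4$ as non-reduced by pointing to $4/1\in\mathcal{R}$, so that $L(4,1)$ already fails condition~(1). The main obstacle, and the bulk of the work, will be the case-analysis on the families $\mathcal{F}_n$: these are defined by explicit continued-fraction patterns, and one has to check that the extremely restrictive shape $p/q=m/1$ (equivalently, a continued fraction $[m]$) cannot occur inside any $\mathcal{F}_n$ except at $m=4$, and likewise for $m/(m-1)=[2,2,\ldots,2]$ with $m-1$ twos. Once these combinatorial checks are in place, combining them with the $\mathcal{R}$-analysis completes the proof.
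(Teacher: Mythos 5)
The paper never proves this proposition: it is imported verbatim from Aceto--Celoria--Park~\cite{Jung}, so there is no internal argument to measure you against, only the verification one would extract from~\cite{Jung} and from Lisca~\cite{Lisca1}\cite{Lisca2}. Your framing is the right one and is essentially the only available route: for a single summand, condition~(2) of Definition~\ref{def:reducedlens} is vacuous, condition~(1) is the question of whether $m/1\in\mathcal{R}$, and condition~(3) is the question of whether $m/1$ or $m/(m-1)$ lies in some $\mathcal{F}_n$; and one half of the equivalence is indeed settled by the single fact $4/1\in\mathcal{R}$, coming from $L(4,1)$ bounding a rational homology ball. (As an aside, your ``if'' and ``only if'' labels are interchanged: exhibiting $L(4,1)$ as non-reduced proves the ``only if'' half of the stated biconditional, while the inspection argument for $m\neq 4$ proves the ``if'' half. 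Harmless, but fix it.)

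The genuine gap is that the decisive step is announced but never carried out. The whole content of the proposition is the pair of non-membership claims for $m\neq 4$: that $m/1\notin\mathcal{R}$, and that neither $m/1$ nor $m/(m-1)$ lies in any $\mathcal{F}_n$; you dispose of both by appeal to ``direct inspection'' of Lisca's lists without ever writing those lists down. For $\mathcal{R}$ the check is short once Lisca's explicit families are quoted (every fraction in $\mathcal{R}$ has numerator a perfect square and a tightly constrained denominator, and forcing the denominator to equal $1$ leaves only $4/1$), but you state the characterization of $\mathcal{R}$ only through the rational-ball theorem plus Casson--Gordon, which by itself gives membership of $4/1$ and says nothing about excluding other integers. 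For the sets $\mathcal{F}_n$ the situation is worse: you yourself flag the case analysis as ``the bulk of the work,'' no definition of $\mathcal{F}_n$ appears anywhere in the proposal, and the claim that the continued fractions $[m]$ and $[2,2,\ldots,2]$ avoid every $\mathcal{F}_n$ (for $m\neq4$) is therefore an assertion rather than an argument. As written this is an outline of the intended proof, not a proof; to complete it you must reproduce Lisca's explicit descriptions of $\mathcal{R}$ and of each $\mathcal{F}_n$ and actually perform the two membership checks against them.
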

Define $\mathcal{L}$ to be the subgroup of the 3-dimensional $\mathbb{Q}$-homology cobordism group which is generated by lens spaces. Then the following propositions hold from Aceteo-Celoria-Park~\cite{Jung}. 
\begin{proposition}[Aceto-Celoria-Park~\cite{Jung}]
For any $Y\in \mathcal{L}$, there exists a unique (up to orientation preserving diffeomorphism), reduced, possibly empty, connected sum of lens spaces $L_Y$ which is $\mathbb{Q}$-homology cobordant to $Y$. Moreover, any non-reduced connected sum of lens spaces $L$ that is $\mathbb{Q}$-homology cobordant to $L_Y$ satisfies $\lvert H_1(L;\mathbb{Z}) \rvert > \lvert H_1 (L_Y;\mathbb{Z})\rvert$. 
\end{proposition}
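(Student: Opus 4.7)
The plan is to establish the three assertions in order: existence of a reduced representative, uniqueness up to orientation-preserving diffeomorphism, and the strict $|H_1|$-minimality statement. Existence will come from a constructive reduction procedure, uniqueness from Lisca's classification of connected sums of lens spaces that bound rational homology balls, and minimality from monitoring $|H_1|$ under the reduction moves.

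For existence, I would proceed algorithmically. Starting with any representative of $Y \in \mathcal{L}$ as a connected sum of lens spaces, if it violates one of the three conditions of Definition~\ref{def:reducedlens} I would apply an appropriate local move. A summand $L(p,q)$ with $p/q \in \mathcal{R}$ bounds a rational homology ball (this is the defining property of $\mathcal{R}$ in Lisca's work), so it can be deleted, decreasing $|H_1|$ by a factor of $p$. A pair of summands $L(p,q) \# L(p,p-q) = L(p,q) \# -L(p,q)$ is null-cobordant in $\mathcal{L}$, so both can be dropped, decreasing $|H_1|$ by a factor of $p^2$. Finally, a summand with $p/q$ or $p/(p-q)$ in $\mathcal{F}_n$ admits, via the relations identified by Lisca, a rational homology cobordism to a simpler connected sum of strictly smaller $|H_1|$. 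Since $|H_1|$ is a positive integer that strictly decreases at each step, the process terminates at a reduced connected sum, which we call $L_Y$.

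For uniqueness, suppose $L_1$ and $L_2$ are both reduced and both $\mathbb{Q}$-homology cobordant to $Y$. Then $L_1 \# -L_2$ bounds a rational homology cobordism to $S^3$, and the key input is Lisca's classification theorem for which connected sums of lens spaces bound rational homology balls. That theorem forces the summands of $L_1 \# -L_2$ to organize themselves into configurations falling under one of the three reducedness conditions. Since $L_1$ and $L_2$ are each individually reduced, no summand of $L_1$ can be absorbed into another summand of $L_1$ (and similarly for $L_2$), so each summand of $L_1$ must cancel against a matching summand of $L_2$. This forces $L_1 \cong L_2$ up to orientation-preserving diffeomorphism.

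For the minimality statement, let $L$ be any non-reduced connected sum of lens spaces $\mathbb{Q}$-homology cobordant to $L_Y$. Running the reduction procedure of the first paragraph on $L$ must, by the uniqueness just established, output $L_Y$; and because $L$ was non-reduced, at least one reduction step is applied, each of which strictly decreases $|H_1|$. Hence $|H_1(L;\mathbb{Z})| > |H_1(L_Y;\mathbb{Z})|$. The principal obstacle in the entire argument is uniqueness, which leans on Lisca's deep classification of rational homology cobordisms among connected sums of lens spaces; existence and the $|H_1|$-comparison then fall out as relatively routine bookkeeping once the reduction moves are in hand.
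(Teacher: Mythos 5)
This proposition is not proved in the paper at all: it is imported verbatim from Aceto--Celoria--Park~\cite{Jung}, so there is no internal proof to compare you against. Your sketch follows the same strategy as the proof in~\cite{Jung} itself --- existence via a reduction algorithm whose moves strictly decrease $\lvert H_1\rvert$, uniqueness via Lisca's classification~\cite{Lisca1}\cite{Lisca2}, and the minimality statement by running the algorithm and invoking uniqueness --- so you are reproducing the original argument in outline rather than giving a new one.

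Two steps are thinner than they need to be. First, in the uniqueness argument the real danger is not that a summand of $L_1$ is ``absorbed'' into another summand of $L_1$: the serious possibility is a \emph{mixed} configuration, in which one summand of $L_1$ and one summand of $-L_2$ together form an $\mathcal{R}$-type or $\mathcal{F}_n$-type piece of Lisca's decomposition without being a mirror pair. Your stated justification (``no summand of $L_1$ can be absorbed into another summand of $L_1$, and similarly for $L_2$'') only rules out same-side pairings, so the conclusion that every summand of $L_1$ cancels against a matching summand of $L_2$ does not yet follow. To exclude the cross terms you must explicitly use condition (3) of Definition~\ref{def:reducedlens} (note it forbids $p/q\in\mathcal{F}_n$ \emph{and} $p/(p-q)\in\mathcal{F}_n$, precisely so that summands of $-L_2$ are also covered), together with the observation that a summand of $-L_2$ with ratio in $\mathcal{R}$ is already excluded by condition (1) for $L_2$, since $L(p,q)$ bounds a rational ball if and only if $L(p,p-q)$ does. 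Second, for the $\mathcal{F}_n$ reduction move you assert that the replacement has strictly smaller $\lvert H_1\rvert$; this is true, but it requires the explicit form of Lisca's relation (a lens space with ratio in $\mathcal{F}_n$ is $\mathbb{Q}$-homology cobordant to a lens space whose first homology has strictly smaller order), not merely the existence of ``a simpler connected sum.'' With those two points supplied, your outline matches the argument given in~\cite{Jung}.
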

Now that the existence of such a reduced connected sum of lens spaces is established, Aceto-Celoria-Park~\cite{Jung} establish the following statements, which we make use of in the proofs of our main results. 

\begin{proposition}[Aceto-Celoria-Park~\cite{Jung}]\label{cor:homoldiv}
For any $Y\in \mathcal{L}$, let $L_Y$ be the reduced connected sum of lens spaces $\mathbb{Q}$-homology cobordant to $Y$. Then there is an injection 
$$H_1(L_Y;\mathbb{Z}) \hookrightarrow H_1(Y;\mathbb{Z}).$$
In particular, $\left | H_1(L_Y;\mathbb{Z}) \right |$ divides $\left | H_1(Y;\mathbb{Z}) \right |$.
\end{proposition}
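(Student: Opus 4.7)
The plan is to construct the injection via a linking-form metabolizer argument arising from the rational homology cobordism between $L_Y$ and $Y$. First I would form the rational homology sphere $Z := Y \# -L_Y$; by hypothesis $Z$ is $\mathbb{Q}$-homology cobordant to $S^3$, hence bounds a rational homology $4$-ball $W$. The splitting
$$H_1(Z;\mathbb{Z}) \cong H_1(Y;\mathbb{Z}) \oplus H_1(L_Y;\mathbb{Z})$$
respects the decomposition of the linking form as $\lambda_Z = \lambda_Y \oplus (-\lambda_{L_Y})$, and a standard Poincar\'e--Lefschetz duality argument identifies $M := \ker(H_1(Z;\mathbb{Z}) \to H_1(W;\mathbb{Z}))$ as a metabolizer of $\lambda_Z$: namely $M = M^{\perp}$, so that $|M|^2 = |H_1(Y;\mathbb{Z})| \cdot |H_1(L_Y;\mathbb{Z})|$.

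The key step is to show that the intersection $A := M \cap H_1(L_Y;\mathbb{Z})$ is trivial. Such an $A$ is automatically a self-annihilating subgroup of $H_1(L_Y;\mathbb{Z})$ with respect to $\lambda_{L_Y}$, and a nontrivial $A$ would permit---via Lisca's classification of linking forms on connected sums of lens spaces---a decomposition of $L_Y$ exhibiting a summand in $\mathcal{R}$, a dual pair $L(p,q) \# L(p,p-q)$, or a summand associated to $\mathcal{F}_n$. Each scenario contradicts the reduced hypothesis, and in fact contradicts the uniqueness statement from the preceding proposition by producing a smaller representative in the $\mathbb{Q}$-homology cobordism class of $L_Y$. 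Consequently $A = 0$, and by the metabolizer condition this is equivalent to $\pi_L \colon M \twoheadrightarrow H_1(L_Y;\mathbb{Z})$ being surjective and $\pi_Y \colon M \hookrightarrow H_1(Y;\mathbb{Z})$ being injective.

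At this point the divisibility $|H_1(L_Y;\mathbb{Z})| \divides |H_1(Y;\mathbb{Z})|$ falls out from the order count $|M|^2 = |H_1(Y;\mathbb{Z})| \cdot |H_1(L_Y;\mathbb{Z})|$ combined with the fact that $|M|$ divides $|H_1(Y;\mathbb{Z})|$. To upgrade this to an honest injection $H_1(L_Y;\mathbb{Z}) \hookrightarrow H_1(Y;\mathbb{Z})$, I would analyze the extension
$$0 \to \ker(\pi_L) \to M \to H_1(L_Y;\mathbb{Z}) \to 0$$
prime-by-prime, using the fact that the reduced hypothesis pins down the primary torsion structure of $H_1(L_Y;\mathbb{Z})$, and then exhibit a splitting compatible with the linking form. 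This embeds $H_1(L_Y;\mathbb{Z})$ as a subgroup of $M$ and hence, via $\pi_Y$, of $H_1(Y;\mathbb{Z})$.

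I expect the central obstacle to be the use of the reduced hypothesis in the vanishing of $A$. Translating reducedness into the nonexistence of nontrivial self-annihilating subgroups for $\lambda_{L_Y}$ is the technical heart of Aceto--Celoria--Park's argument and relies on Lisca's fine classification of linking forms of connected sums of lens spaces via continued fractions; without that input, the metabolizer argument above gives no control over $A$.
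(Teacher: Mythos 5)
This proposition is not proved in the paper at all --- it is quoted from Aceto--Celoria--Park~\cite{Jung} --- so the comparison is really between your sketch and their argument. Your opening moves are fine and standard: forming $Y \# -L_Y$, observing it bounds a rational homology ball $W$, and identifying $M=\ker\bigl(H_1(Y\#-L_Y;\mathbb{Z})\to H_1(W;\mathbb{Z})\bigr)$ as a metabolizer of $\lambda_Y\oplus(-\lambda_{L_Y})$ with $|M|^2=|H_1(Y)|\cdot|H_1(L_Y)|$ is the usual Casson--Gordon argument. (Incidentally, once $A:=M\cap H_1(L_Y;\mathbb{Z})=0$ is known, the final ``upgrade'' needs no prime-by-prime splitting: $H_1(L_Y;\mathbb{Z})$ is then a quotient of the finite abelian group $M$, hence abstractly isomorphic to a subgroup of $M$, which sits inside $H_1(Y;\mathbb{Z})$.)

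The genuine gap is the key step, $A=0$. You claim a nontrivial self-annihilating subgroup of $(H_1(L_Y),\lambda_{L_Y})$ would contradict reducedness ``via Lisca's classification of linking forms,'' but reducedness is not a linking-form condition and cannot be leveraged this way. Concretely, $L(9,1)$ is reduced (Proposition~\ref{prop:lensmnot4}), yet its linking form $\lambda(x,y)=\pm xy/9$ on $\mathbb{Z}/9$ has the nontrivial isotropic subgroup generated by $3$, since $\lambda(3,3)=\pm 9/9\equiv 0$; more generally every $L(m^2,q)$ carries an isotropic subgroup (even a metabolizer) generated by $m$, while most such lens spaces are reduced and do not bound rational balls. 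Indeed Lisca's theorems are not a classification of linking forms at all --- they determine which connected sums of lens spaces bound rational homology balls, and the proof goes through Donaldson's diagonalization theorem and embeddings of linear lattices, precisely because the linking-form obstruction is far too weak. The actual Aceto--Celoria--Park proof of this proposition runs through that smooth $4$-dimensional machinery in a relative form: they cap the lens-space end of the rational homology cobordism with the canonical definite plumbings and carry out a Lisca-style lattice-embedding analysis to show $H_1(L_Y;\mathbb{Z})\to H_1$ of the cobordism is injective, from which the injection into $H_1(Y;\mathbb{Z})$ and the divisibility follow. Without that input your metabolizer argument gives no control over $A$, so the proposed route, as written, does not close.
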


\section{Main Results}

\begin{theorem}\label{thm:specificours}
A linear combination of alternating torus knots is concordant to an $L$-space knot if and only if $K=T(2,q)$. 
\end{theorem}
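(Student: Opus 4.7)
The plan is to combine the determinant identity from Corollary~\ref{coro:det} with the reduced lens space machinery of Aceto-Celoria-Park~\cite{Jung}, passing through the double branched cover of $K$. The reverse implication is immediate since every $T(2,q)$ is itself an $L$-space knot, so assume $K=K_2^+\#K_2^-$ is concordant to an $L$-space knot $J$, with $K_2^+=\#_{i=1}^m T(2,p_i)$ and $K_2^-=\#_{j=1}^n -T(2,q_j)$; by our conventions each $p_i$ and $q_j$ is odd and at least $3$. The goal is to show $n=0$, since then $K=K_2^+$ and Livingston's Proposition~\ref{prop:livingston} forces $m=1$, giving $K=T(2,p_1)$.

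First I would pass to double branched covers. A knot concordance $K\simeq J$ induces a $\mathbb{Q}$-homology cobordism between $\Sigma_2(K)$ and $\Sigma_2(J)$, and since $\Sigma_2(T(2,q))=L(q,1)$ and $\Sigma_2$ is additive under connected sums, we have
\begin{equation*}
\Sigma_2(K)=\bigl(\#_{i=1}^m L(p_i,1)\bigr)\,\#\,\bigl(\#_{j=1}^n L(q_j,q_j-1)\bigr).
\end{equation*}
In particular $\Sigma_2(K)\in\mathcal{L}$, so $\Sigma_2(J)\in\mathcal{L}$ as well.

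Next I would verify that the connected sum above is already \emph{reduced} in the sense of Definition~\ref{def:reducedlens}. Every summand has the form $L(m,1)$ up to orientation with $m$ odd and $m\geq 3$, so Proposition~\ref{prop:lensmnot4} shows that each summand individually satisfies conditions~(1) and~(3). For condition~(2), any pair $L(p,q)\#L(p,p-q)$ selected from our summands must be of one of the forms $L(p_i,1)\#L(p_{i'},1)$, $L(q_j,q_j-1)\#L(q_{j'},q_{j'}-1)$, or $L(p_i,1)\#L(q_j,q_j-1)$. The first two cases force $p=2$, which is impossible; the mixed case forces $p_i=q_j$, which would mean $T(2,p_i)$ and $-T(2,p_i)$ both appear as summands of $K$, contradicting the reduced hypothesis on~$K$.

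By uniqueness of the reduced form (Aceto-Celoria-Park~\cite{Jung}), $L_{\Sigma_2(J)}=\Sigma_2(K)$. Proposition~\ref{cor:homoldiv} then yields
\begin{equation*}
\det(K)=|H_1(\Sigma_2(K);\mathbb{Z})|\quad\text{divides}\quad|H_1(\Sigma_2(J);\mathbb{Z})|=\det(J).
\end{equation*}
Combined with Corollary~\ref{coro:det}, this says $\prod_{i}p_i\,\prod_{j}q_j$ divides $\prod_{i}p_i\big/\prod_{j}q_j$, forcing $\bigl(\prod_j q_j\bigr)^{2}\leq 1$ and hence $n=0$. I expect the most delicate step to be the verification that $\Sigma_2(K)$ is already reduced as a connected sum of lens spaces: Proposition~\ref{prop:lensmnot4} handles a single summand, but one must check each way two summands can pair into the prohibited form $L(p,q)\#L(p,p-q)$, and the reduced-knot hypothesis on $K$ is used precisely here to rule out the mixed pair. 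The rest of the argument is a clean chaining of the cited results.
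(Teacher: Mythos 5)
Your proposal is correct and follows essentially the same route as the paper's proof: the determinant identity of Corollary~\ref{coro:det}, the observation that $\Sigma_2(K)$ is already a reduced connected sum of lens spaces so that Proposition~\ref{cor:homoldiv} gives $\det(K)\mid\det(J)$, the resulting forcing of $\det(K_2^-)=1$, and then Livingston's Proposition~\ref{prop:livingston} to finish. The only differences are cosmetic (an opposite orientation convention for $\Sigma_2(T(2,q))$ versus $\Sigma_2(-T(2,q))$, and a more explicit check of condition~(2) of Definition~\ref{def:reducedlens}, which the paper leaves as ``straightforward to verify'').
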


\begin{proof}
Let $K$ be a connected sum of alternating torus knots. Then $K=K_2^+\# K_2^-$. Observe that the converse holds since torus knots are $L$-space knots, as they admit positive integral Dehn surgery to lens spaces. See Proposition 3.2 of Moser~\cite{Moser} for the homeomorphism type. Suppose that $K$ is concordant to an $L$-space knot $J$. Then by Corollary~\ref{coro:det}, we have
\begin{align*}
\text{det}(J)&=\frac{\det(K_2^+)}{ \det(K_2^{-})}.
\end{align*}
Next, we establish the value $\text{det}(K)$. Consider the following sequence of equalities:
\begin{align*}
\text{det}(K)&=\det(K_2^+\#K_2^-)\\
&=\det(K_2^+)\det(K_2^-).
\end{align*}

By Rolfsen~\cite[p. 303]{Rolfsen}, the double branched cover of $T(2,q)$ (resp., $-T(2,q)$) is the lens space $L(q,q-1)$ (resp., $L(q,1)$), so the double branched cover of $K$, denoted as $\Sigma_2(K)$, is a connected sum of lens spaces of the form $L(q,1)$ or $L(q,q-1)$. Using Definition~\ref{def:reducedlens} and Proposition~\ref{prop:lensmnot4}, it is straightforward to verify that $\Sigma_2(K)$ is a reduced representative of lens spaces in $\mathcal{L}$. In particular, $\Sigma_2(K)$ is a reduced representative in the cobordism class of $\Sigma_2(J)$ in the subgroup of the 3-dimensional $\mathbb{Q}$-homology cobordism group generated by lens spaces. Therefore, by Corollary~\ref{cor:homoldiv}, we find that 
$\left |H_1(\Sigma_2(K);\mathbb{Z}) \right | $ divides $\left |H_1(\Sigma_2(J);\mathbb{Z}) \right |$. 
Recall from Rolfsen~\cite[p. 213]{Rolfsen} or Lickorish~\cite[p. 95]{Lickorish} that the determinant of a knot $K$ is the order of the first integral homology of its double branched cover. Thus $\det(K)$ divides $\det(J)$. From our previous computations of $\det(K)$ and $\det(J)$, we find that 
$\det(K_2^+)\det(K_2^-)$ divides $\frac{\det(K_2^+)}{\det(K_2^-)}$, so $\det(K_2^-)=1$ is forced.
Since $\det(T(2,q))=q$ and the determinant is multiplicative, it must follow that $K_2^-$ is the unknot for we would have a contradiction otherwise. We may assume that $K$ is a connected sum of positive torus knots. The desired result is immediate from Proposition~\ref{prop:livingston}.
\end{proof}

\begin{theorem}\label{thm:genours}
If a connected sum of torus knots $K= K^+ \# K^- \# K_2^- $ is concordant to an $L$-space knot, then $\det(K_2^-)$ must divide $ \det(K^+)$.
\end{theorem}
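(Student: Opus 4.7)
The plan is to follow the strategy of Theorem~\ref{thm:specificours} but isolate the lens-space contribution coming from $K_2^-$ alone. In the general case $\Sigma_2(K^+)$ and $\Sigma_2(K^-)$ are Brieskorn manifolds rather than lens spaces, so I would push these factors across the concordance cobordism to the $L$-space side before applying the machinery of Aceto--Celoria--Park.

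First, I would apply Corollary~\ref{coro:det} to the grouping $K = K^+ \# (K^- \# K_2^-)$, recording
$$\det(J) = \frac{\det(K^+)}{\det(K^-)\det(K_2^-)}.$$
Next, since $K\simeq J$ the knot $K \# -J$ is slice, so its double branched cover $\Sigma_2(K) \# -\Sigma_2(J)$ bounds a rational homology $4$-ball. Splitting $\Sigma_2(K) = \Sigma_2(K^+) \# \Sigma_2(K^-) \# \Sigma_2(K_2^-)$ and rearranging, this means that $\Sigma_2(K_2^-)$ and
$$Y := \Sigma_2(J) \# -\Sigma_2(K^+) \# -\Sigma_2(K^-)$$
represent the same class in the $\mathbb{Q}$-homology cobordism group. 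Because $\Sigma_2(K_2^-)$ is a connected sum of lens spaces, this class lies in $\mathcal{L}$, and in particular $Y\in\mathcal{L}$.

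I would then verify that $\Sigma_2(K_2^-)$ is a reduced connected sum of lens spaces in the sense of Definition~\ref{def:reducedlens}. Each summand has the form $L(q_i,1)$ with $q_i\geq 3$ odd, so $q_i\neq 4$ and Proposition~\ref{prop:lensmnot4} handles individual reducedness; condition~(2) cannot fail because $K_2^-$ consists only of mirrored $T(2,q)$ summands and so never produces a paired $L(q_i,q_i-1)$ factor. Consequently $\Sigma_2(K_2^-)$ is the reduced representative $L_Y$ of the class of $Y$, so Proposition~\ref{cor:homoldiv} yields
$$\det(K_2^-) = |H_1(\Sigma_2(K_2^-))|\;\bigm|\;|H_1(Y)| = \det(J)\det(K^+)\det(K^-).$$

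Finally I would substitute the first display into the right-hand side; the $\det(K^-)$'s cancel and the product collapses to $\det(K^+)^2/\det(K_2^-)$. This gives $\det(K_2^-)^2\mid \det(K^+)^2$, and since $a^2\mid b^2$ implies $a\mid b$ for positive integers, $\det(K_2^-)\mid\det(K^+)$. The main obstacle I anticipate is the reducedness check: the sets $\mathcal{R}$ and $\mathcal{F}_n$ from Definition~\ref{def:reducedlens} are not spelled out in the excerpt, and one must rule out complications when the same $q$ appears multiple times among the summands of $K_2^-$. However, the packaged Proposition~\ref{prop:lensmnot4}, together with the mirroring convention that forbids paired $L(q,q-1)$ summands, should dispatch these exactly as in Theorem~\ref{thm:specificours}.
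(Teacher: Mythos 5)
Your proposal is correct and follows essentially the same route as the paper: rearrange the concordance to $K_2^-\simeq -K^+\#-K^-\#J$, note that $\Sigma_2(K_2^-)$ is a reduced connected sum of lens spaces, apply the Aceto--Celoria--Park divisibility (Proposition~\ref{cor:homoldiv}) together with Corollary~\ref{coro:det}, and conclude $\det(K_2^-)^2\mid\det(K^+)^2$. The only difference is cosmetic: you spell out at the level of double branched covers and the reducedness check what the paper compresses into the phrase ``analogous to the proof of Theorem~\ref{thm:specificours}.''
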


\begin{proof}
Let $K= K^+ \# K^- \# K_2^- $ be a connected sum of torus knots. Suppose $K$ is concordant to an $L$-space knot $J$. 
Thus $ K^+ \# K^- \# K_2^- \simeq J$.
We may compute $\text{det}(J)$ as follows:
\begin{align}
\text{det}(J)&=\frac{\det(K^{+})}{\det(K^{-}\#K_2^-)}
&\mbox{by Corollary~\ref{coro:det}}\nonumber\\
&=\frac{\det(K^{+})}{\det(K^{-})\det(K_2^-)}\label{detJ}.
\end{align}
Equivalently, we find that
$$K_2^-\simeq -K^+ \# -K^-\#J.$$
We may compute the determinant of the right side as follows:
\begin{align*}
\text{det}(-K^+ \#-K^-\#J)&=\text{det}(-K^+)\det(-K^-)\det(J)\\
&=\text{det}(-K^+)\det(-K^-)\cdot \frac{\det(K^{+})}{\det(K^{-})\det(K_2^-)}.
&\mbox{by Equation~\eqref{detJ}}
\end{align*}
Recall that since the Alexander polynomial cannot distinguish a knot from its mirror image, it follows that $\det(-K)=\det(K)$ for all knots $K$. Using this fact, the following holds:
\begin{align}
\text{det}(-K^+)\det(-K^-)\cdot \frac{\det(K^{+})}{\det(K^{-})\det(K_2^-)}&=\det(K^+)\det(K^-)\cdot \frac{\det(K^{+})}{\det(K^{-})\det(K_2^-)}\nonumber\\
&=\frac{\det(K^{+})^2}{\det(K_2^-)}\label{detJ+otherstuff}.
\end{align}
Analogous to the proof of Theorem~\ref{thm:specificours}, we find that $\det(K_2^-)$ must divide the value of $\det(-K^+ \#-K^-\#J)$. By using Equation~\eqref{detJ+otherstuff}, we conclude that $\det(K_2^-)$ divides $\frac{\det(K^{+})^2}{\det(K_2^-)}$. Therefore $\det(K_2^-)^2$ must divide $\det(K^+)^2$. Hence $\det(K_2^-)$ divides $\det(K^+)$, as desired. 
\end{proof}

Recall that many torus knots have determinant one. In fact, every torus knot $T(p,q)$ with $p$ and $q$ odd has this property. By restricting our attention to this large class of knots, we achieve the following corollary.

\begin{corollary}
If $K=K^+ \# K^-\#K_2^-$ is a connected sum of torus knots such that $K_2^-$ is nontrivial and $K^+$ has determinant 1, then $K$ is not concordant to an $L$-space knot.
\end{corollary}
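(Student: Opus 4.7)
The plan is to prove the corollary by direct contradiction using Theorem~\ref{thm:genours}. Suppose for contradiction that $K = K^+ \# K^- \# K_2^-$ is concordant to an $L$-space knot. Then Theorem~\ref{thm:genours} applies and yields that $\det(K_2^-)$ divides $\det(K^+)$. By hypothesis $\det(K^+) = 1$, so this forces $\det(K_2^-) = 1$.

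Next I would show that a nontrivial $K_2^-$ must have determinant strictly greater than $1$, producing the contradiction. Since $K_2^-$ is by definition a connected sum of torus knots of the form $-T(2,q_i)$ with $\gcd(2,q_i) = 1$, each $q_i$ is odd; for $-T(2,q_i)$ to be a nontrivial summand we must have $q_i \geq 3$. Using the facts that $\det(T(2,q)) = q$, that $\det(-K) = \det(K)$, and that the determinant is multiplicative under connected sum, one gets $\det(K_2^-) = \prod q_i \geq 3 > 1$, contradicting $\det(K_2^-) = 1$.

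There is essentially no obstacle here: the corollary is an immediate consequence of Theorem~\ref{thm:genours} combined with the elementary fact that any nontrivial connected sum of $T(2,q)$ torus knots has determinant at least $3$. The only subtlety worth mentioning in the write-up is reminding the reader that the convention on $K_2^-$ established in the notation section forces each $q_i$ to be odd and at least $3$ whenever the summand is nontrivial, so that the divisibility relation $\det(K_2^-) \mid 1$ is genuinely incompatible with nontriviality.
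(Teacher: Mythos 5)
Your proposal is correct and is essentially the paper's own argument: both apply Theorem~\ref{thm:genours} (contradiction versus contrapositive is an immaterial difference) together with the observation that a nontrivial $K_2^-$ has determinant at least $3$, so the divisibility $\det(K_2^-)\mid\det(K^+)=1$ is impossible. No gaps; the write-up matches the paper's proof in substance.
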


\begin{proof}
Suppose $K=K^+ \# K^-\#K_2^-$ is a connected sum of torus knots such that $K_2^-$ is nontrivial and $K^+$ has determinant 1. Since $\det(T(2,q))=q$ and determinant is multiplicative, it must be that $\det(K_2^-)>2$. Additionally, by assumption, $\det(K^+)=1$. Hence $\det(K_2^-)>\det(K^+)$ holds. Thus it cannot be the case that $\det(K_2^-)$ divides $\det(K^+)$. Therefore, by the contrapositive of Theorem~\ref{thm:genours}, $K$ is not concordant to an $L$-space knot. 
\end{proof}

\section{Acknowledgements}
We begin by thanking those involved with the Georgia Institute of Technology summer 2020 REU, where this research was done. We wholeheartedly thank JungHwan Park for introducing us to this topic, guiding fruitful discussions of necessary background and providing suggestions to improve the exposition of the paper. We also thank Zolt\'an Szab\'o, Ian Zemke, Christopher William Davis, and Carolyn Otto for discussions regarding knot concordance, concordance invariants and exposition. We also thank the anonymous referees for their helpful comments and suggestions.

\medskip
\hrule
\medskip

\noindent 2020 {\it Mathematics Subject Classification}:
Primary: 57N70, 57M12 Secondary: 57K10

\noindent \emph{Keywords: torus knots, concordance, $L$-space knots}

\end{document}